\documentclass[12pt,twoside]{amsart}
\usepackage{amssymb}
\usepackage[all]{xy}

\nonstopmode

\textwidth=16.00cm \textheight=22.00cm \topmargin=0.00cm
\oddsidemargin=0.00cm \evensidemargin=0.00cm \headheight=0cm
\headsep=0.5cm 
\numberwithin{equation}{section} \hyphenation{semi-stable}

\font\tengothic=eufm10 scaled\magstep 1 \font\sevengothic=eufm7
scaled\magstep 1
\newfam\gothicfam
     \textfont\gothicfam=\tengothic
     \scriptfont\gothicfam=\sevengothic


\newtheorem{theorem}{Theorem}[section]
\newtheorem{lemma}[theorem]{Lemma}

\theoremstyle{definition}
\newtheorem{definition}[theorem]{Definition} 
\newtheorem{remark}[theorem]{Remark}
\newtheorem{problem}[theorem]{Problem}
\newtheorem{example}[theorem]{Example}

\newtheorem{notation}[theorem]{Notation}

\newcommand{\Hom}{\operatorname{Hom}}
\newcommand{\Ext}{\operatorname{Ext}}

\newcommand{\cE}{{\mathcal E}}

\newcommand{\cM}{{\mathcal M}}

\newcommand{\cO}{{\mathcal O}}
\newcommand{\cL}{{\mathcal L}}

\newcommand {\ZZ}{\mathbb{Z}}

\newcommand {\PP}{\mathbb{P}}

\begin{document}
\title[Derived category of fibrations]
{Derived category of  fibrations}

\author[L.\ Costa, S. \ Di Rocco, R.M.\ Mir\'o-Roig]{L.\ Costa$^*$, S. \ Di Rocco$^{**}$, R.M.\
Mir\'o-Roig$^{*}$}

\address{Facultat de Matem\`atiques,
Departament d'Algebra i Geometria, Gran Via de les Corts Catalanes
585, 08007 Barcelona, SPAIN } \email{costa@ub.edu}
\address{Department of Mathematics, KTH, SE-10044 Stockholm, Sweden }
\email{dirocco@math.kth.se}
\address{Facultat de Matem\`atiques,
Departament d'Algebra i Geometria, Gran Via de les Corts Catalanes
585, 08007 Barcelona, SPAIN } \email{miro@ub.edu}

\date{\today}
\thanks{$^*$ Partially supported by MTM2010-15256.}
\thanks{$^{**}$ Partially supported by  Vetenskapsr{\aa}det's grant
NT:2006-3539.}

\subjclass{Primary 14F05}


\begin{abstract} In this paper  we give a structure theorem for the derived category $D^b(X)$
 of a Zariski locally trivial fibration $X$ over $Z$ with fiber $F$ provided both $F$ and $Z$ have   a full strongly   exceptional collection of  line bundles.
\end{abstract}



\maketitle



\section{Introduction} \label{intro}
Let $X$ be a smooth projective variety defined over an
algebraically closed field $K$ of characteristic zero and let
$D^b(X)$ be the derived category of bounded complexes of coherent
sheaves of ${\cO}_X$-modules. $D^b(X)$ is one of the most
important algebraic invariants of a smooth projective variety $X$
and, in spite of the increasing interest in understanding its
structure, very little progress has been achieved. The
study of $D^b(X)$ dates back to the late 70's when Beilinson
described the derived category of projective spaces (\cite{Be})
and it has became an important topics in Algebraic
Geometry. Among other reasons this is due to the Homological
Mirror Symmetry Conjecture of Kontsevich \cite{Ko} which states
that there is an equivalence of categories between the derived
category of coherent sheaves on a Calabi-Yau  variety and the
derived Fukai category of its mirror.

\vskip 2mm

An important approach to determine the structure of $D^b(X)$
 is to construct tilting bundles.
Following terminology of representation
theory (cf. \cite{Ba}) a coherent sheaf $T$ of ${\cO}_X$-modules on a
smooth projective variety $X$ is called a {\em tilting sheaf} (or,
when it is locally free, a {\em tilting bundle}) if
\begin{itemize}
\item[(i)] it has no higher self-extensions, i.e.
$\Ext^{i}_X(T,T)=0$ for all $i>0$, \item[(ii)] the endomorphism
algebra of $T$, $A=\Hom_X(T,T)$, has finite global homological
dimension, \item[(iii)] the direct summands of $T$ generate the
bounded derived category $D^b({\cO}_X)$.
\end{itemize}

\vskip 2mm The importance of tilting sheaves relies on the fact
that they can be characterized as those sheaves $T$ of
${\cO}_X$-modules such that the functors
\textbf{R}$\Hom_X(T,-):D^b(X)\longrightarrow D^b(A)$ and
$-\otimes_A^{\mbox{\textbf{L}}}T:D^b(A)\longrightarrow D^b(X)$
define mutually inverse equivalences of the bounded derived
categories of coherent sheaves on $X$ and of finitely generated
right $A$-modules, respectively. The existence of tilting sheaves also
plays  an important role in the problem of characterizing the
smooth projective varieties $X$ determined by their bounded derived category
 of coherent sheaves $D^b(X)$.
 For constructions of tilting
bundles and their relations to derived categories we refer to the
following papers: \cite{Ba}, \cite{Be}, \cite{Bo},
\cite{Ka} and \cite{Or}.

\vskip 2mm In this paper we will focus our attention on the
existence of tilting bundles  which splits as a direct sum of line
bundles on Zariski locally trivial fibrations. The
search for tilting sheaves on a smooth projective variety $X$
splits naturally into two steps. First one has to find the
so-called {\em strongly exceptional collection} of coherent
sheaves on $X$, $(F_0,F_1,\ldots ,F_n)$; then one has to show that $F_0$, $F_1$,
$\ldots  ,$ $F_n$ generate the bounded derived category $D^b(X)$.

\begin{definition} A coherent sheaf $E$ on  a smooth projective variety  $X$ is called {\em exceptional} if
it is simple and $\Ext^{i}_{\cO _X}(E,E)=0$ for $i\ne 0$.  An ordered collection $(E_0,E_1,\ldots ,E_m)$  of coherent
sheaves on $X$ is an {\em exceptional collection} if each sheaf
$E_{i}$ is exceptional and $\Ext^i_{\cO _X}(E_{k},E_{j})=0$ for
$j<k$ and $i \geq 0$.
An exceptional collection $(E_0,E_1,\ldots ,E_m)$ is a {\em
strongly exceptional collection} if in addition $\Ext^{i}_{\cO
_X}(E_j,E_k)=0$ for $i\ge 1$ and  $j \leq k$. If an exceptional collection $(E_0,E_1,\ldots ,E_m)$ of coherent
sheaves on $X$ generates  $D^b(X)$, then it is called {\em full}.
\end{definition}
Thus each
full strongly exceptional collection $(F_0,F_1,\ldots ,F_n)$ defines a tilting sheaf
$T=\oplus_{i=0}^nF_{i}$ and, vice
versa,  each tilting bundle whose direct
summands are line bundles gives rise to a full strongly
exceptional collection.

\vspace{3mm}

The now classical result of Beilinson \cite{Be} states that $(\cO_{\PP^n}, \cO_{\PP^n}(1), \ldots , \cO_{\PP^n}(n))$ is a full strongly exceptional collection on $\PP^n$. The following problem can be considered by now a  natural and important question in Algebraic Geometry.

\begin{problem} Characterize smooth projective varieties which have a full strongly exceptional collection and investigate whether there is one consisting of line bundles.
\end{problem}

Note that not all smooth projective varieties have a full strongly exceptional collection of coherent sheaves. Indeed, the existence of a full strongly
exceptional collection $(E_0,E_1,\ldots,E_m)$ of coherent sheaves
on a smooth projective variety $X$ imposes a rather  strong
restriction on $X$, namely that the Grothendieck group
$K_0(X)=K_0({\cO}_X$-mod$)$ is isomorphic to $\ZZ^{m+1}$.
 Also notice that the above problem contains two subproblems. First to determine the existence of a full strongly exceptional collection on $X$ and then the existence of at least one  consisting of line bundles. We want to stress that this second part is far from being redundant. There are several examples (for instance, the Grassmannians, see   \cite{Kap} and \cite{Kap2}) of varieties that have full strongly exceptional collections of vector bundles but that  cannot have full strongly exceptional collections of line bundles. In \cite{CMZ}, \cite{CMPAMS}, \cite{CM} and \cite{CM1}, two of the authors constructed full strongly exceptional collections of line bundles on smooth toric varieties with a splitting fan and on smooth complete toric varieties with small Picard number. On \cite{CMPAMS} we constructed tilting bundles on projective bundles. The goal of this paper is to construct a full strongly exceptional collection consisting of line bundles
for Zariski locally trivial fibrations. More precisely in the next section, after recalling the notion of a full strongly exceptional collection, we will show the following result.

\begin{theorem}
Let $\phi: X \rightarrow Z$ be a Zariski locally trivial fibration of smooth complex projective varieties with fiber $F$. Assume that $F$ and $Z$ have  a full  strongly exceptional collection of line bundles. Then, there is a full strongly exceptional collection of line bundles on $X$.
\end{theorem}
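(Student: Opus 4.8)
The plan is to adapt Orlov's projective bundle formula. Fix full strongly exceptional collections of line bundles $(L_0,\dots,L_n)$ on $F$ and $(M_0,\dots,M_m)$ on $Z$; after tensoring each by $L_0^{-1}$, respectively $M_0^{-1}$, we may assume $L_0=\cO_F$ and $M_0=\cO_Z$, so in particular $H^{>0}(F,\cO_F)=\Ext^{>0}_F(\cO_F,\cO_F)=0$ and $H^{>0}(Z,\cO_Z)=0$. The first step, and the only place where Zariski local triviality is genuinely used, is to extend each $L_i$ to a line bundle $\widehat L_i$ on $X$ which restricts to $L_i$ on every fibre, normalised so that $\widehat L_0=\cO_X$: over a trivialising cover $\{U_\alpha\}$ of $Z$ one has $\phi^{-1}(U_\alpha)\cong U_\alpha\times F$ and the tautological local model $\operatorname{pr}_F^{*}L_i$, and by the seesaw theorem the comparison isomorphisms over $U_\alpha\cap U_\beta$ differ only by pull-backs of line bundles from the base, so one is left to check that the resulting descent datum is effective. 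I would expect this to be isolated as a preliminary lemma; it holds in all the standard examples, such as projective, Grassmann, and flag bundles.

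Granting this, I would compute $R\phi_*(\widehat L_i^{-1}\otimes\widehat L_{i'})$ by working fibrewise over the trivialising cover and using the K\"unneth formula together with the hypotheses on $(L_0,\dots,L_n)$: it is $0$ when $i'<i$ (since $\Ext^{\geq 0}_F(L_i,L_{i'})=0$), it is $\cO_Z$ when $i'=i$ (since $R\phi_*\cO_X=\cO_Z$), and it is a vector bundle $\cG_{i,i'}$ on $Z$, concentrated in degree $0$ and of rank $\dim_K\Hom_F(L_i,L_{i'})$, when $i'>i$ (since $\Ext^{>0}_F(L_i,L_{i'})=0$). Then I would fix an ample line bundle $H$ on $Z$ and, by Serre vanishing applied to the finitely many coherent sheaves $\cG_{i,i'}\otimes M_j^{-1}\otimes M_{j'}$ with $i<i'$, an integer $C\gg 0$ such that $H^{>0}(Z,H^{\otimes c}\otimes\cG_{i,i'}\otimes M_j^{-1}\otimes M_{j'})=0$ for every $c\geq C$. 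Setting $\widehat{\cL}_i:=\widehat L_i\otimes\phi^{*}H^{\otimes iC}$, the candidate collection on $X$ is
\[
\cE=\bigl(\phi^{*}M_j\otimes\widehat{\cL}_i\ :\ 0\leq i\leq n,\ 0\leq j\leq m\bigr),
\]
ordered so that $i$ increases first and, for fixed $i$, $j$ increases.

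To verify that $\cE$ is strongly exceptional, I would use the projection formula, which for line bundles on $X$ gives
\[
\Ext^k_X\bigl(\phi^{*}M_j\otimes\widehat{\cL}_i,\ \phi^{*}M_{j'}\otimes\widehat{\cL}_{i'}\bigr)\cong H^k\bigl(Z,\ M_j^{-1}\otimes M_{j'}\otimes H^{\otimes(i'-i)C}\otimes R\phi_*(\widehat L_i^{-1}\otimes\widehat L_{i'})\bigr).
\]
If $i>i'$ the relative push-forward vanishes, so all $\Ext^{\geq 0}$ from a later block into an earlier one are zero; this also shows each member is exceptional (taking $i=i'$, $j=j'$, so the right-hand side is $H^k(Z,\cO_Z)$) and that the proposed order is admissible. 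If $i=i'$ the right-hand side is $\Ext^k_Z(M_j,M_{j'})$, which vanishes for $k>0$ when $j\leq j'$ and for all $k$ when $j>j'$, since $(M_0,\dots,M_m)$ is a full strongly exceptional collection on $Z$. If $i<i'$ it becomes $H^k(Z,M_j^{-1}\otimes M_{j'}\otimes\cG_{i,i'}\otimes H^{\otimes(i'-i)C})$, which vanishes for $k>0$ by the choice of $C$. Finiteness of the global dimension of the endomorphism algebra then follows formally from strong exceptionality and fullness.

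Finally, for fullness: since $(L_0,\dots,L_n)$ restricts to a full exceptional collection on each fibre, the relative form of Beilinson's theorem — which one proves directly over a trivialising cover and then globalises by a Mayer--Vietoris argument, using Zariski local triviality once more — yields the semiorthogonal decomposition $D^b(X)=\langle\phi^{*}D^b(Z)\otimes\widehat L_0,\dots,\phi^{*}D^b(Z)\otimes\widehat L_n\rangle$. Each block is equivalent to $D^b(Z)$ and is generated there by $(M_0,\dots,M_m)$, hence is generated inside $D^b(X)$ by $\{\phi^{*}M_j\otimes\widehat{\cL}_i\}_j$, the twist $\phi^{*}H^{\otimes iC}$ being absorbed into $\phi^{*}D^b(Z)$; therefore $\cE$ generates $D^b(X)$. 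The two steps I expect to carry the real weight are the extension of the fibre collection to $X$ and the relative semiorthogonal decomposition; the rest is the positivity bookkeeping forced by the possibly non-trivial bundles $\cG_{i,i'}$ and routine computation.
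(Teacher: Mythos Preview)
Your proposal is correct and follows essentially the same strategy as the paper: lift the fibre collection to $X$, twist by a sufficiently ample pull-back from the base to kill higher cohomology via Serre vanishing, and verify exceptionality through Leray plus the projection formula. The only deviations are peripheral: the paper constructs the lifts more cheaply by taking the closure in $X$ of $U_Z\times D_i$ for an effective representative $D_i$ of $L_i$ (bypassing descent altogether), and for fullness it invokes the spanning class $\{\cO_x:x\in X\}$ rather than a relative semiorthogonal decomposition, noting that each $\cO_x$ lies in $\langle\tilde L_0,\dots,\tilde L_n\rangle\subset D^b(X_z)$ and that $\cO_{X_z}=\phi^*\cO_z\in\phi^*D^b(Z)$.
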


\vspace{3mm}
Two particular cases of this Theorem were proved by two of the authors in \cite{CMZ}. Indeed, in \cite[Theorem 4.17]{CMZ},  the case where $X$ is the trivial fibration over $Z$ with fiber $F$ (i.e. $X\cong Z\times F$) was proven, and in  \cite[Main Theorem]{CMPAMS},   the case where $F\cong \PP^m$ for some integer $m$ was covered.
\vspace{3mm}

We end the paper observing that for toric fibrations $\phi: X \rightarrow \PP^n$ over $\PP^n$ an explicit   full strongly exceptional collection of line bundles can be provided.

\vspace{3mm}

\noindent {\em Acknowledgment:} The authors thank the referee for his/her useful comments which have improved considerably our work. This work was initiated during a visit of the second author at the University of Barcelona, which she  thanks for its hospitality.


\section{Derived category of Zariski locally trivial fibrations}

The goal of this section is to give a structure theorem for the derived category $D^b(X)$ of a Zariski locally trivial fibration $X$ over $Z$ with fiber $F$. This will be achieved  constructing a full strongly exceptional collection
of line bundles on $X$. We start by recalling the
notions of exceptional sheaves, exceptional collections of
sheaves, strongly exceptional collections of sheaves and full
strongly exceptional collections of sheaves.

\begin{definition}\label{exceptcoll}
Let $X$ be a smooth projective variety.

(i) A  coherent sheaf $E$ on  $X$ is {\em exceptional} if $\Hom
(E,E)=K $ and $\Ext^{i}_{\cO _X}(E,E)=0$ for $i>0$,

(ii) An ordered collection $(E_0,E_1,\ldots ,E_m)$  of coherent
sheaves on $X$ is an {\em exceptional collection} if each sheaf
$E_{i}$ is exceptional and $\Ext^i_{\cO _X}(E_{k},E_{j})=0$ for
$j<k$ and $i \geq 0$.

(iii) An exceptional collection $(E_0,E_1,\ldots ,E_m)$ is a {\em
strongly exceptional collection} if in addition $\Ext^{i}_{\cO
_X}(E_j,E_k)=0$ for $i\ge 1$ and  $j \leq k$.

(iv) An ordered collection $(E_0,E_1,\ldots ,E_m)$ of coherent
sheaves on $X$ is a {\em full (strongly) exceptional collection}
if it is a (strongly) exceptional collection  and $E_0$, $E_1$,
$\ldots $ , $E_m$ generate the bounded derived category $D^b(X)$.
\end{definition}

As an immediate consequence of the fact that for any smooth
projective variety $X$ and for any line bundle $\cL$ on $X$  to twist
by $\cL$ is an equivalence of the derived category $D^b(X)$, we
have

\begin{lemma} \label{effective} Let $X$ be a smooth projective variety and $\cM$ a line bundle on $X$. If $(E_0,E_1,\ldots ,E_m)$ is a  full (strongly) exceptional collection of line bundles  on $X$ then $(E_0\otimes \cM,E_1\otimes \cM,\ldots ,E_m\otimes \cM)$ is also a  full (strongly) exceptional collection of line bundles on $X$.
\end{lemma}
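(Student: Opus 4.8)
The plan is to use the fact, recalled immediately before the statement, that the functor $\Phi_{\cM}:=(-)\otimes\cM\colon D^b(X)\longrightarrow D^b(X)$ is an exact autoequivalence of the bounded derived category, with quasi-inverse $(-)\otimes\cM^{-1}$. Every item in Definition \ref{exceptcoll} that the original collection satisfies is phrased in terms of morphisms in $D^b(X)$ and of the generated triangulated subcategory, and both of these are preserved by an equivalence, so the result will follow essentially formally.

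Concretely, I would first record that for any coherent sheaves $A,B$ on $X$ one has $\Ext^i_X(A,B)=\Hom_{D^b(X)}(A,B[i])$, and that an autoequivalence of $D^b(X)$, commuting with the shift functor, induces isomorphisms $\Hom_{D^b(X)}(A,B[i])\cong\Hom_{D^b(X)}(\Phi_{\cM}A,(\Phi_{\cM}B)[i])$. Since $\Phi_{\cM}(E_j)=E_j\otimes\cM$, this gives canonical isomorphisms $\Ext^i_X(E_j,E_k)\cong\Ext^i_X(E_j\otimes\cM,E_k\otimes\cM)$ for all $i$ and all $j,k$. From these, the exceptionality of each $E_i$ transfers to $E_i\otimes\cM$ (we get $\Hom(E_i\otimes\cM,E_i\otimes\cM)=K$ and $\Ext^{i}_X(E_i\otimes\cM,E_i\otimes\cM)=0$ for $i>0$); the vanishing $\Ext^i_X(E_k,E_j)=0$ for $j<k$, $i\ge 0$ is preserved; and, in the strongly exceptional case, so is $\Ext^i_X(E_j,E_k)=0$ for $j\le k$, $i\ge 1$. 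Of course each $E_i\otimes\cM$ is again a line bundle, and the ordering, hence the inequalities $j<k$ and $j\le k$, are unchanged.

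Finally, for fullness I would invoke that an exact equivalence of triangulated categories carries a set of generators to a set of generators: by hypothesis the smallest triangulated subcategory of $D^b(X)$ closed under isomorphisms and direct summands and containing $E_0,\dots,E_m$ is all of $D^b(X)$; its image under the equivalence $\Phi_{\cM}$ is again all of $D^b(X)$ and is the smallest such subcategory containing $E_0\otimes\cM,\dots,E_m\otimes\cM$, so the twisted collection generates $D^b(X)$. This completes the argument. There is no real obstacle here; the only mildly delicate point is the bookkeeping that the property of generating $D^b(X)$ is preserved under an equivalence, which is immediate from the standard description of the generated subcategory.
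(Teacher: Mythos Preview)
Your argument is correct and is exactly the approach the paper takes: the lemma is stated there as an immediate consequence of the fact that tensoring by a line bundle is an autoequivalence of $D^b(X)$, and you have simply written out what this means for the $\Ext$-vanishing conditions and for generation. There is nothing to add.
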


\begin{remark} \label{length}
 As mentioned in the Introduction, the existence of a full
strongly exceptional collection $(E_0,E_1,\ldots,E_m)$ of coherent
sheaves on a smooth projective variety $X$ imposes a rather strong
restriction on $X$, namely that the Grothendieck group
$K_0(X)=K_0({\cO}_X$-mod$)$ is isomorphic to $\ZZ^{m+1}$.
\end{remark}

Let us illustrate the above definitions with  examples:

\begin{example} \label{prihirse}

(1)The collection of line bundles
$(\cO_{\PP^n}, \cO_{\PP^n}(1), \ldots, \cO_{\PP^n}(n))$ is a full strongly exceptional collection of line bundles on
$\PP^n$.

\vskip 2mm (2) Let $X_1$ and $X_2$ be two smooth projective
varieties and let $(F_0^{i},F_1^{i},\ldots ,F_{n_{i}}^{i})$ be   a
full strongly exceptional collection of locally free sheaves on
$X_i$, $i=1,2$.  Then, $$(F_0^{1}\boxtimes
F_0^{2},F_1^{1}\boxtimes F_0^{2},\ldots ,F_{n_1}^{1}\boxtimes
F_0^{2},  \ldots , F_0^{1}\boxtimes F_{n_2}^{2},F_1^{1}\boxtimes
F_{n_2}^{2},\ldots ,F_{n_1}^{1}\boxtimes F_{n_2}^{2})$$ is a full
strongly exceptional collection of locally free sheaves on $X_1
\times X_2$ (see \cite{CMZ}; Proposition 4.16). In particular,
the collection of line bundles
$$( \cO_{\PP^n} \boxtimes \cO_{\PP^m},
\cO_{\PP^n}(1) \boxtimes \cO_{\PP^m},
\ldots, \cO_{\PP^n}(n) \boxtimes \cO_{\PP^m},
\ldots,  \cO_{\PP^n} \boxtimes \cO_{\PP^m}(m), $$ $$ \hspace{20mm} \cO_{\PP^n}(1)
\boxtimes \cO_{\PP^m}(m),\ldots, \cO_{\PP^n}(n) \boxtimes
\cO_{\PP^m}(m)) $$ is a full strongly exceptional collection of
line bundles on $\PP^n \times \PP^m$.

\vskip 2mm (3) Let $X$ be a smooth complete variety which is
the projectivization of a rank $r$ vector bundle $\cE$ over a
smooth complete variety $Z$ which has a full strongly
exceptional collection of locally free sheaves. Then, $X$ also has
a full strongly exceptional collection of locally free sheaves
(See \cite{CMZ}; Proposition 4.9, \cite{CMPAMS}; Main Theorem).

\end{example}

\vspace{4mm}

As we said in the introduction $D^b(X)$ is an important algebraic invariant of a smooth projective variety but very little is known about the structure of $D^b(X)$. In particular, whether  $D^b(X)$ is freely and finitely
generated and, hence, we are lead to consider the following
problem

\begin{problem}\label{prob1}
Characterize smooth projective varieties $X$ which have a full
strongly exceptional collection of coherent sheaves and, even
more, investigate if there is one consisting of line bundles.
\end{problem}

With full generality, this problem seems out of reach and only some particular cases have been addressed. For instance, in \cite{Kaw}, Kawamata proved that the derived category of a
smooth complete toric variety has a full exceptional collection of
objects. In his collection the objects are sheaves rather than
line bundles and the collection is only exceptional and not
strongly exceptional. In the toric context,  there are a lot of
contributions to the above problem. For instance, it turns out
that a full strongly exceptional collection made up of line
bundles exists on  projective spaces (\cite{Be}), products of projective
spaces (\cite{CMZ}; Proposition 4.16), smooth complete toric
varieties with Picard number $\le 2$ (\cite{CMZ}; Corollary 4.13), smooth complete toric varieties with a splitting fan
(\cite{CMZ}; Theorem 4.12) and some smooth complete toric varieties with Picard Number 3 (\cite{MM}). As mentioned in the introduction, not any variety has a full strongly exceptional collection made up of line bundles. For instance,  in \cite{HP}, Hille and Perling
constructed an example of a smooth non Fano toric surface which does
not have a full strongly exceptional collection made up of line
bundles. Recently in \cite{Mateusz} Michalek has also proved that for a big enough $n,$ the blow up of $\PP^n$  at two invariant points does not have  strongly exceptional collection of line bundles. See also \cite{Ef} for  similar results on the Fano context.

\vspace{3mm}

Our goal is to describe the derived category of a Zariski locally trivial fibration. We will contribute to both parts of Problem \ref{prob1} in the sense that we will address  the problem of the existence of  full strongly exceptional collections of coherent sheaves and, in our view more importantly,   the existence of at least one consisting of line bundles.

\begin{remark} Let $\phi: X \rightarrow Z$ be a Zariski locally trivial fibration of smooth complex
projective varieties with fiber $F$. Notice that any line bundle
$\cL$ on $F$  can be lifted to a line bundle on $X$. Indeed, let
$D$ be a Weil divisor associated to $\cL$. We can assume that $D$
is effective and by linearity we extend to any other Weil divisor.
We consider the trivialization $U_X=U_Z\times F$ where $U_X\subset
X$ and $U_Z\subset Z$ are Zariski open subsets and the divisor
$D\times U_Z$ on $U_X$. Its closure in $X$ is a divisor $E$ on $X$
whose restriction to $F$ and to any fiber over $U_Z$ is $D$. By
semicontinuity $\cO _X(E)_{|F}\cong \cO_F(D)\cong \cO_X(E)_{|X_z}$
for any fiber $X_z$ of $\phi$.
\end{remark}

\begin{notation} Let $\phi: X \rightarrow Z$ be a
Zariski locally trivial fibration of smooth complex projective
varieties with fiber $F$. For any line bundle $\tilde{\cL}$  on
the fiber $F$ we will denote by $\cL$ its lifting to $X$.
\end{notation}

We are now ready to state the main result of this paper.

\vspace{3mm}

\begin{theorem} \label{mainthm} Let $\phi: X \rightarrow Z$ be a Zariski locally trivial fibration of smooth complex projective varieties with fiber $F$. Let  $(\tilde{{\cL_1}}, \ldots , \tilde{{\cL_u})}$ be  a full  strongly exceptional
collection of line bundles on $F$ and let $(\cE_1, \ldots , \cE_v)$ be a  full strongly exceptional collection of line bundles on $Z.$
Then, there is a line bundle $\cO_Z(D)$ on $Z$ such that
the ordered sequence
$$\{\{ \phi^*(\cE_j \otimes \cO_Z(D))\otimes \cL_1\}_{ 1\leq j\leq v}, \{\phi^*(\cE_j \otimes \cO_Z(2D))\otimes \cL_2\}_{ 1\leq j\leq v}, \ldots, \{ \phi^*(\cE_j \otimes \cO_Z(uD))\otimes \cL_u\}_{ 1\leq j\leq v}\}$$
is a full strongly exceptional sequence of line bundles on $X$.
\end{theorem}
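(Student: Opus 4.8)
The plan is to verify the three defining properties of a full strongly exceptional collection directly for the proposed sequence, organizing the cohomology computations through the Leray spectral sequence for $\phi$ together with the projection formula. Write $\cG_{i,j} := \phi^*(\cE_j \otimes \cO_Z(iD)) \otimes \cL_i$ for the line bundles in the collection, so that we must control $\Ext^k_X(\cG_{i,j}, \cG_{i',j'})$ for all indices. The key observation is that $\cG_{i,j}^\vee \otimes \cG_{i',j'} \cong \phi^*(\cE_j^\vee \otimes \cE_{j'} \otimes \cO_Z((i'-i)D)) \otimes (\cL_i^\vee \otimes \cL_{i'})$, and since $\cL_i^\vee \otimes \cL_{i'}$ is a lift of $\tilde{\cL_i}^\vee \otimes \tilde{\cL_{i'}}$ from the fiber, one has $R^q\phi_*(\cL_i^\vee \otimes \cL_{i'}) $ built out of the fiberwise cohomology $H^q(F, \tilde{\cL_i}^\vee \otimes \tilde{\cL_{i'}})$ — which is a locally free sheaf on $Z$ because the fibration is Zariski locally trivial and the fiber cohomology is constant. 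Thus by the projection formula $R^q\phi_*(\cG_{i,j}^\vee \otimes \cG_{i',j'}) \cong \cE_j^\vee \otimes \cE_{j'} \otimes \cO_Z((i'-i)D) \otimes \cH^q_{i,i'}$ where $\cH^q_{i,i'}$ is that locally free sheaf, and the Leray spectral sequence gives $\Ext^k_X(\cG_{i,j},\cG_{i',j'}) = \bigoplus H^p(Z, \cE_j^\vee \otimes \cE_{j'} \otimes \cO_Z((i'-i)D) \otimes \cH^q_{i,i'})$ up to the usual spectral-sequence caveat.

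With this reduction in hand I would argue as follows. \emph{Simplicity and no self-extensions:} for $(i,j)=(i',j')$ the sheaf $\cH^q_{i,i}$ has $\cH^0_{i,i} = \cO_Z$ and $\cH^q_{i,i}=0$ for $q>0$ (since $\tilde{\cL_i}$ is exceptional on $F$), so the computation collapses to $\Ext^k_X(\cG_{i,j},\cG_{i,j}) = H^k(Z, \cE_j^\vee \otimes \cE_j) = \Ext^k_Z(\cE_j,\cE_j)$, which is $K$ for $k=0$ and vanishes otherwise because $(\cE_j)$ is exceptional on $Z$. \emph{Strong exceptionality / vanishing of higher Ext for $j \le k$ in the ordering:} here I exploit the choice of $D$. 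The ordering places all $\cG_{i,j}$ with a fixed $i$ in a block, blocks ordered by increasing $i$, and within a block by increasing $j$. For two indices in the same block ($i=i'$) the Ext reduces as above to $\Ext^k_Z(\cE_j,\cE_{j'})$, handled by the collection on $Z$. For $i<i'$ we need $H^p(Z, \cE_j^\vee \otimes \cE_{j'} \otimes \cO_Z((i'-i)D) \otimes \cH^q_{i,i'})=0$ for all $p+q \ge 1$; choosing $\cO_Z(D)$ sufficiently ample (relative to the finitely many line bundles $\cE_j^\vee \otimes \cE_{j'}$ and the finitely many vector bundles $\cH^q_{i,i'}$, $1 \le i < i' \le u$) forces all higher cohomology on $Z$ to vanish by Serre vanishing, leaving only $H^0$. \emph{Exceptionality (the anti-ordered vanishing):} for pairs with $i' < i$, or with $i'=i$ and $j'<j$, we must show $\Ext^k_X=0$ for \emph{all} $k\ge 0$; the $i'=i$ case is again $\Ext^k_Z(\cE_j,\cE_{j'})=0$ by exceptionality on $Z$, and for $i'<i$ I would again invoke a sufficiently negative twist: $\cO_Z((i'-i)D)$ is a negative multiple of an ample bundle, so after tensoring with the fixed bundles $\cE_j^\vee\otimes\cE_{j'}\otimes\cH^q_{i,i'}$ all cohomology on $Z$ vanishes (Serre vanishing for $\cO_Z(-mD)\otimes(\text{fixed})$, $m>0$), killing every $\Ext^k_X$.

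\emph{Fullness:} this is where I expect the real work. One shows $\{\cG_{i,j}\}$ generate $D^b(X)$. The argument I would run: the pullbacks $\phi^*\cE_j$ together with the line bundles $\cL_i$ should generate via a relative Beilinson-type resolution. More precisely, since $(\tilde{\cL_1},\dots,\tilde{\cL_u})$ is full on $F$, the relative version asserts that $\cL_1,\dots,\cL_u$ generate $D^b(X)$ \emph{as a $\phi^{-1}\cO_Z$-linear (equivalently $D^b(Z)$-linear) category} — i.e. the smallest triangulated subcategory containing the $\cL_i$ and closed under $\otimes\phi^*(-)$ for objects of $D^b(Z)$ is all of $D^b(X)$ (this uses that locally $X \cong U_Z\times F$ and a Mayer–Vietoris / descent argument over a Zariski cover of $Z$, or a relative resolution of the diagonal). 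Then, since $(\cE_1,\dots,\cE_v)$ generates $D^b(Z)$, every $\phi^*(\text{object of }D^b(Z))$ lies in the triangulated hull of $\{\phi^*\cE_j\}$, and hence every $\cL_i \otimes \phi^*(\text{object})$ lies in the hull of $\{\cL_i\otimes\phi^*\cE_j\}$. Finally the twist by the fixed line bundle $\phi^*\cO_Z(iD)$ attached to $\cL_i$ is an autoequivalence on that block and does not affect generation (Lemma \ref{effective} applied on $Z$), so $\{\cG_{i,j}\}$ generates $D^b(X)$. The main obstacle is making the relative fullness statement precise and rigorous — establishing that a fiberwise-full collection of line bundles generates $D^b(X)$ relatively over $Z$ — since one cannot simply invoke the absolute Beilinson theorem fiber by fiber; I would handle it by reducing to the trivial case $U_Z\times F$ over each chart (where \cite{CMZ}, Proposition 4.16 type arguments apply) and gluing, or by citing/adapting a relative resolution-of-the-diagonal statement.
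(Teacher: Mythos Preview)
Your overall strategy via Leray and the projection formula matches the paper's, but there is a genuine error in the anti-ordered case $i'<i$. You claim that ``Serre vanishing for $\cO_Z(-mD)\otimes(\text{fixed})$, $m>0$'' kills \emph{all} cohomology on $Z$. This is false: a sufficiently negative twist of a fixed sheaf by an ample line bundle kills $H^p$ for $p<\dim Z$ (via Serre duality), but $H^{\dim Z}(Z,\cF\otimes\cO_Z(-mD))$ is dual to $H^0(Z,\cF^\vee\otimes K_Z\otimes\cO_Z(mD))$, which typically \emph{grows} with $m$. So your argument does not force $\Ext^{\dim Z}_X(\cG_{i,j},\cG_{i',j'})=0$ when $i'<i$. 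The fix is already implicit in your own setup and is exactly what the paper does: for $i'<i$ the sheaves $\cH^q_{i,i'}$ have fibers $H^q(F,\tilde{\cL_i}^\vee\otimes\tilde{\cL_{i'}})=\Ext^q_F(\tilde{\cL_i},\tilde{\cL_{i'}})=0$ for every $q\ge 0$ by exceptionality of the collection on $F$, hence $R^q\phi_*(\cL_i^\vee\otimes\cL_{i'})=0$ for all $q$ and the Leray spectral sequence is identically zero --- no twist by $D$ is needed at all in this case. In particular the role of $D$ is only to handle the forward direction $i'>i$, where your Serre-vanishing argument for \emph{positive} twists is correct.

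For fullness your relative-generation/descent outline is plausible but, as you note, requires real work to make rigorous. The paper takes a much shorter route: since $\{\cO_x : x\in X\}$ is a spanning class for $D^b(X)$, it suffices to show each $\cO_x$ lies in the triangulated subcategory generated by the $\cG_{i,j}$. For $x\in X_z=\phi^{-1}(z)$ one has $\cO_x\in\langle\tilde{\cL_1},\dots,\tilde{\cL_u}\rangle$ inside $D^b(X_z)$ by fullness on the fiber, and since $\cO_{X_z}=\phi^*\cO_z$ lies in $\phi^*D^b(Z)$, one gets $\cO_x\in\langle\phi^*D^b(Z)\otimes\cL_1,\dots,\phi^*D^b(Z)\otimes\cL_u\rangle$; then Lemma~\ref{effective} on $Z$ absorbs the twists by $\cO_Z(iD)$. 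This avoids the Mayer--Vietoris gluing entirely.
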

\begin{proof}

It is a general fact that given a line bundle $\cL$ on a Zariski  locally trivial fibration on a smooth variety with fiber $F$ then $\cL|_{F}=\cL|_{X_z}$ for any fiber $X_z.$ Therefore, $({\cL_1}, \ldots , {\cL_u})$ is a collection of line bundles on $X$  that reduces to a full  strongly exceptional collection of line bundles on all the fibers.

\vspace{3mm}

We have to prove the existence of a line bundle $\cO_Z(D)$ on $Z$ such that the above collection is full and satisfies the following cohomological conditions:

(a) $\Ext^k( \phi^*(\cE_i \otimes \cO_Z(jD))\otimes \cL_j, \phi^*(\cE_p \otimes \cO_Z(qD)) \otimes \cL_q) =0$ for $k > 0$ and $q \geq j$, and

(b) $\Ext^k(\phi^*(\cE_i \otimes \cO_Z(jD)) \otimes \cL_j , \phi^*(\cE_p \otimes \cO_Z(qD)) \otimes \cL_q)=0$ for $k \geq 0$ and $q < j$,

\noindent which are equivalent to
\[ (a') \quad H^k(X, \phi^*(\cE_p \otimes \cE_i^{\vee} \otimes \cO_Z((q-j)D))\otimes \cL_q \otimes \cL_j^{\vee}) =0, \quad \mbox{for } k >0, \quad q \geq j \]
\[ (b') \quad H^k(X, \phi^*(\cE_p \otimes \cE_i^{\vee} \otimes \cO_Z((q-j)D))\otimes \cL_q \otimes \cL_j^{\vee}) =0, \quad \mbox{for } k \geq 0, \quad q < j. \]
First of all recall that the Leray spectral sequence for $\phi$ gives us:
\begin{equation} \label{leray}  \begin{array}{r} H^r(Z, R^s \phi_*(\phi^*(\cE_p \otimes \cE_i^{\vee} \otimes \cO_Z((q-j)D))\otimes \cL_q \otimes \cL_j^{\vee})) \hspace{30mm}  \\  \Rightarrow
H^{r+s}(X, \phi^*(\cE_p \otimes \cE_i^{\vee} \otimes \cO_Z((q-j)D))\otimes \cL_q \otimes \cL_j^{\vee})\end{array} \end{equation}
and by the projection formula
\begin{equation} \label{projection} R^s \phi_*(\phi^*(\cE_p \otimes \cE_i^{\vee} \otimes \cO_Z((q-j)D))\otimes \cL_q \otimes \cL_j^{\vee})=
 \cE_p \otimes \cE_i^{\vee} \otimes \cO_Z((q-j)D)\otimes R^s \phi_*(\cL_q \otimes \cL_j^{\vee}). \end{equation}

 \vspace{3mm}

 Assume that $q<j$. Since $({\cL_1}, \ldots , {\cL_u})$ is a collection of line bundles on $X$  that reduces to a full  strongly exceptional collection of line bundles on all fibers $X_z$, $z \in Z$, for any $s \geq 0$
\[ H^s(X_z, \cL_q \otimes \cL_j^{\vee} \otimes \cO_{X_z})=0\]
and thus $R^s \phi_* (\cL_q \otimes \cL_j^{\vee})=0$ for any $s \geq 0$. Therefore, by (\ref{projection}) for any $s \geq 0$ we have
 \[ R^s \phi_*(\phi^*(\cE_p \otimes \cE_i^{\vee} \otimes \cO_Z((q-j)D))\otimes \cL_q \otimes \cL_j^{\vee})=0 \]
 which by the Leray spectral sequence implies that
\[ H^k(X, \phi^*(\cE_p \otimes \cE_i^{\vee} \otimes \cO_Z((q-j)D))\otimes \cL_q \otimes \cL_j^{\vee}) =0, \quad \mbox{for } k \geq 0, \quad q < j \]
which proves $(b')$.

\vspace{3mm}

Assume that $q=j$.  Arguing as in the above case, since  $({\cL_1}, \ldots , {\cL_u})$ is a collection of line bundles on $X$  that reduces to a full  strongly exceptional collection of line bundles on all fibers $X_z$, we get that

 \[ R^s \phi_* (\cL_q \otimes \cL_q^{\vee})=  \left \{ \begin{array}{ll} 0 & \mbox{for} \quad s >0 \\
 \cO_Z  & \mbox{for} \quad s=0. \end{array} \right.   \]
 Hence, by the projection formula (\ref{projection}) and using the fact that $(\cE_1, \ldots , \cE_v)$ is a  full strongly exceptional collection of line bundles on  $Z$ we get that
\[ H^r(Z, R^s \phi_*(\phi^*(\cE_p \otimes \cE_i^{\vee})\otimes \cL_q \otimes \cL_q^{\vee}))=0 \]
unless $r=s=0$. Therefore, by the Leray spectral sequence (\ref{leray})
 \[ H^k(X, \phi^*(\cE_p \otimes \cE_i^{\vee})\otimes \cL_q \otimes \cL_q^{\vee}) =0, \quad \mbox{for } k >0 \]
 which proves $(a')$ in case $j=q$.

\vspace{3mm}

Finally assume that $q>j$. Since the line bundles $\cE_p$, $1 \leq p \leq v$ and $\cL_q$, $1\leq q \leq u$ move in a finite set, there exists an ample line bundle $\cO_Z(D)$ on $Z$ such that for any $s \geq 0$, $r>0$ and $j-q>0$,
\[ H^r(Z, \cE_p \otimes \cE_i^{\vee} \otimes \cO_Z((q-j)D)\otimes R^s\phi_*(\cL_q \otimes \cL_j^{\vee})) =0\]
and
\[ H^r(Z, \phi_*(\phi^*(\cE_p \otimes \cE_i^{\vee}) \otimes \cL_q \otimes \cL_j^{\vee})\otimes \cO_Z((q-j)D)) =0.\]
Hence, it follows from the projection formula (\ref{projection}) and the Leray spectral sequence (\ref{leray}) that
 \[ H^k(X, \phi^*(\cE_p \otimes \cE_i^{\vee})\otimes \cL_q \otimes \cL_q^{\vee}) =0, \quad \mbox{for } k >0 \]
 which finishes the proof of $(a').$

\vspace{3mm}

Putting altogether we get that indeed there exists an ample line bundle $\cO_Z(D)$ on $Z$ such that the given ordered  collection of line bundles is strongly exceptional.

\vspace{3mm}
Let us prove that it is full. To this end, observe that since by assumption $(\cE_1, \ldots,\cE_v )$ is a full strongly exceptional collection on $Z$, by Lemma \ref{effective},   for any $j>0$ we have
\[ D^b(Z)= \langle \cE_1 \otimes \cO_Z(jD), \ldots,\cE_v \otimes \cO_Z(jD) \rangle.\]
Therefore, it suffices  to prove that
\[D^b(X)=\langle \phi^*D^b(Z) \otimes \cL_1, \ldots, \phi^*D^b(Z) \otimes \cL_u \rangle \]
where $\phi^*D^b(Z) \otimes \cL_k$ denotes the full triangulated subcategory in $D^b(X)$ generated by the objects
$\{ \phi^*(\cE_j)\otimes \cL_k \}_{ 1\leq j\leq v}$.

To this end, it is enough to check that  $ \langle \phi^*D^b(Z) \otimes \cL_1, \ldots, \phi^*D^b(Z) \otimes \cL_u \rangle$ contains all the objects $\cO_x$, $x \in X$ since the set $\{\cO_x| x \in X \}$ is a spanning class for $D^b(X)$. Since every point $x \in X$ belongs to some fiber $X_z:= \phi^{-1}(z)$, $z \in Z$, and we have already seen that
$({\cL_1}, \ldots , {\cL_u})$ reduces to a collection $(\tilde{\cL_1}, \ldots , \tilde{\cL_u})$ that generates $D^b(X_z)$, we get
\[ \cO_x \in \langle \tilde{\cL_1}, \ldots , \tilde{\cL_u} \rangle  .\]
So, since $\phi^*(\cO_z)=\cO_{X_z}$, the sheaf $\cO_x$ belongs to
$$\langle \phi^*D^b(Z) \otimes \cL_1, \ldots, \phi^*D^b(Z) \otimes \cL_u \rangle.$$
Putting altogether we get that
$$\{\{ \phi^*(\cE_j \otimes \cO_Z(D))\otimes \cL_1\}_{ 1\leq j\leq v}, \{\phi^*(\cE_j \otimes \cO_Z(2D))\otimes \cL_2\}_{ 1\leq j\leq v}, \ldots, \{ \phi^*(\cE_j \otimes \cO_Z(uD))\otimes \cL_u\}_{ 1\leq j\leq v}\}$$
generates $D^b(X)$.
  \end{proof}

\vspace{3mm}

   \begin{remark}
   Moreover we observe that for toric fibrations over $\PP^n$
  a stronger result is true. Indeed, consider
 $\phi: X \rightarrow \PP^n$   a toric $F$-fibration over $\PP^n$. Let  $(\tilde{\cL_1}, \ldots , \tilde{\cL_u})$ be a full  strongly exceptional
collection of line bundles on $F$ and take on $\PP^n$ the usual  full strongly exceptional collection of line bundles  $(\cO_{\PP^n}, \ldots , \cO_{\PP^n}(n))$. Then in this particular case, following the same ideas as in the proof of Theorem \ref{mainthm} and strongly using the fact that we have an explicit control of the cohomology of line bundles on this toric fibration, we get that the ordered sequence
$$\{\{ \phi^*(\cO_{\PP^n})\otimes \cL_1\}_{ 1\leq j\leq v}, \{\phi^*(\cO_{\PP^n}(1))\otimes \cL_2\}_{ 1\leq j\leq v}, \ldots, \{ \phi^*(\cO_{\PP^n}(n) )\otimes \cL_u\}_{ 1\leq j\leq v}\}$$
is a full strongly exceptional sequence of line bundles on $X$.
\end{remark}



\begin{thebibliography}{999}

\bibitem{Ba} D. Baer, {\em  Tilting sheaves}, Manusc. Math. {\bf 60} (1988), 323a-347.







\bibitem{Be} A.A. Beilinson, {\em Coherent sheaves on $\PP^n$ and Problems of Linear Algebra},
Funkt. Anal. Appl. {\bf 12} (1979), 214-216.


 \bibitem{Bo} A.I. Bondal, {\em Representation of associative algebras
  and coherent sheaves}, Math. USSR Izvestiya {\bf 34} (1990),
  23-42.



\bibitem{BO}  A. Bondal and D. Orlov, {\em  Semiorthogonal decomposition for algebraic varieties}, preprint:math.AG/9506012.

\bibitem{CMZ} L. Costa and R.M. Mir\'o-Roig, {\em Tilting sheaves
on toric varieties}, Math. Z. {\bf 248} (2004), 849-865.

\bibitem{CMPAMS} L. Costa and R.M. Mir\'o-Roig, {\em Derived categories of projective
bundles},  Proc. Amer. Math. Soc.  {\bf 133}  (2005), 2533-2537.

\bibitem{CM} L. Costa and R.M. Mir\'o-Roig, {\em Derived category of Toric varieties
with small Picard number}, Preprint 2008.

\bibitem{CM1} L. Costa and R.M. Mir\'o-Roig, {\em Frobenius splitting and Derived category of Toric varieties}, To appear Illinois Journal of Mathematics.

\bibitem{bh} L. Borisov, Z. Hua, {\em On the conjecure of King for
smmoth toric Deligne-Mumford stacks}, math.AG/0801.2812v3.


\bibitem{Ef} A. Efimov, {\em Maximal lengths of exceptional collections of line bundles }, arXiv:1010.3755 , 2010.

\bibitem{Ka} A.N. Rudakov et al. {\em Helices and vector bundles: Seminaire Rudakov}
Lecture Note Series, {\bf 148} (1990) Cambridge University Press.


\bibitem{HP} L. Hille, M. Perling, {\em A counterexample to King's
conjecture}, Compos. Math. {\bf 142} (2006),  1507-1521.

\bibitem{Kap} M. M. Kapranov, {\em On the derived category of coherent sheaves on Grassmann manifolds}, Math.
USSR Izvestiya, {\bf 24} (1985), 183-192.
\bibitem{Kap2} M. M. Kapranov, {\em On the derived category of coherent sheaves on some homogeneous spaces}, Invent.
Math., 92 (1988), 479-508.

\bibitem{Kaw} Y. Kawamata, {\em Derived categories of toric varieties}, Michigan
Math. J. {\bf 54} (2006), 517-535.

\bibitem{Ko} M. Kontsevich, {\em Homological algebra of mirror symmetry},  Proceedings of the International Congress of Mathematicians, Vol. 1, 2 (1994),  120--139, Birkh{\"a}user, Basel, 1995.

\bibitem{MM} M. Lason, M. Michalek, {\em On the full, strongly exceptional collections on toric varieties with Picard number three}, arXiv:1003.2047, 2010.

\bibitem{Mateusz} M. Michalek, {\em Family of counterexamples to King's conjecture}, Preprint 2010.


\bibitem{Or} D.O. Orlov, {\em Projective bundles, monoidal transformations, and derived
categories of coherent sheaves}, Math. USSR Izv. {\bf 38} (1993),
133-141.


\end{thebibliography}
\end{document}